\title{\textbf{A note on edge-colourings avoiding rainbow $K_4$ and
    monochromatic $K_m$}}
\author{Veselin Jungi\'{c}$^1$\\
  Tom\'{a}\v{s} Kaiser$^2$\footnote{The Institute for Theoretical
    Computer Science (ITI) is supported by project 1M0545 of the Czech
    Ministry of Education.}
  \\
  Daniel Kr\'{a}l'$^{3*}$} \date{}
\newtheorem{theorem}{Theorem}
\newtheorem{lemma}[theorem]{Lemma}
\newtheorem{proposition}[theorem]{Proposition}
\newtheorem{problem}{Problem}
\newcommand\Setx[1] {\left\{{#1}\right\}}
\newcommand\size[1] {\left|{#1}\right|}
\newcommand\maxr[3] {{maxR}({#1},{#2},{#3})}
\newcommand\minr[3] {{minR}({#1},{#2},{#3})}
\newcommand{\graph}{
  \tikzstyle{every node}=[circle,fill,minimum size=4pt,inner sep=0pt];
  \tikzstyle{every edge}=[draw];
  \tikzstyle{highlight}=[thick,draw];
}
\begin{document}

\footnotetext[1]{Department of Mathematics, Simon Fraser University,
  Burnaby, B.C., V5A 2R6, Canada. E-mail: \texttt{vjungic@sfu.ca}.}
\footnotetext[2]{Department of Mathematics and Institute for
  Theoretical Computer Science, University of West Bohemia,
  Univerzitn\'\i~8, 306~14~Plze\v n, Czech Republic. E-mail:
  \texttt{kaisert@kma.zcu.cz}. Supported by Research Plan MSM
  4977751301 of the Czech Ministry of Education.}
\footnotetext[3]{Institute for Theoretical Computer Science, Faculty
  of Mathematics and Physics, Charles University, Malostransk\'{e}
  n\'{a}m\v{e}st\'{\i} 25, 118~00~Prague, Czech Republic. E-mail:
  \texttt{kral@kam.mff.cuni.cz}.}%

\maketitle

\begin{abstract}
  We study the mixed Ramsey number $\maxr n {K_m}{K_r}$, defined as
  the maximum number of colours in an edge-colouring of the complete
  graph $K_n$, such that $K_n$ has no monochromatic complete subgraph
  on $m$ vertices and no rainbow complete subgraph on $r$
  vertices. Improving an upper bound of Axenovich and Iverson, we show
  that $\maxr n {K_m}{K_4} \leq n^{3/2}\sqrt{2m}$ for all $m\geq
  3$. Further, we discuss a possible way to improve their lower bound
  on $\maxr n {K_4}{K_4}$ based on incidence graphs of finite
  projective planes.
\end{abstract}


\section{Introduction}
\label{sec:intro}

A subgraph of an edge-coloured graph is \emph{monochromatic} if all of
its edges receive the same colour, and it is \emph{rainbow} if all the
edge colours are distinct. Ramsey theory was born with the observation
that every sufficiently large complete graph whose edges are coloured
by $k$ colours, where $k$ is a fixed integer, contains a large
monochromatic complete subgraph~\cite{bib:Ram-problem,
  bib:ES-combinatorial}. In the following decades, it evolved into a
rich part of graph theory with strong links to combinatorial number
theory~\cite{bib:Nes-ramsey} and combinatorial geometry (see,
e.g.,~\cite{bib:PA-combinatorial}). There are many Ramsey-type
problems that involve monochromatic substructures of various
combinatorial structures, but some are of a different type.

Among them is the question asked by Erd\H{o}s, Simonovits and
S\'{o}s~\cite{bib:ESS-anti}: Given a graph $H$ and an integer $n$,
what is the maximum number of colours in an edge-colouring of a
complete graph $K_n$ such that no copy of $H$ in $K_n$ is rainbow?
This number is the \emph{anti-Ramsey number} (for $H$ and $n$) (see
also \cite{bib:MBNL-anti-ramsey,bib:MBNL-anti-ramsey-2}).

As a combination of the two problems, Axenovich and
Iverson~\cite{bib:AI-edge-colorings} defined the \emph{mixed Ramsey
  numbers} $\maxr n G H$ and $\minr n G H$ as the maximum
(respectively, minimum) number of colours in an edge-colouring of
$K_n$ such that no monochromatic subgraph of $K_n$ is isomorphic to
$G$ and no rainbow subgraph is isomorphic to $H$. They noted that the
numbers are well-defined whenever the edges of $G$ do not induce a
star and $H$ is not a forest (see also \cite{bib:JW-pattern}). Their
results asymptotically determine the behaviour of $\maxr n G H$ in
most cases and exhibit a close relation between this number and the
\emph{vertex arboricity} $a(H)$ of $H$, defined as the least number of
parts in a decomposition of $V(H)$ into sets inducing acyclic
subgraphs of $H$.

In the present paper, we will be concerned with bounds on $\maxr n
{K_m}{K_4}$ for $m\geq 3$. Let us briefly recall some of the results
of~\cite{bib:AI-edge-colorings} related to $\maxr n G H$. Assume that
the edges of $G$ do not induce a star. If $a(H) \geq 3$, then $\maxr n
G H$ is quadratic in $n$, namely
\begin{equation*}
  \maxr n G H = \frac{n^2}2 \Bigl( 1-\frac1{a(H) - 1} \Bigr)(1+o(1)).
\end{equation*}
On the other hand, if $a(H) = 2$, then $\maxr n G H$ is subquadratic:
\begin{equation*}
  \maxr n G H = O(n^{2-\frac1\epsilon}),
\end{equation*}
for some $\epsilon$ which depends on $G$ and $H$. There exists a more
explicit upper bound if $V(H)$ can be decomposed into two sets
inducing forests, one of which is of order at most 2. In this case,
\begin{equation}
  \maxr n G H \leq n^{5/3} (1+o(1)). \label{eq:upper}
\end{equation}
In the special case that $H$ is a cycle, $\maxr n G H$ can be
determined for non-bipartite graphs $G$:
\begin{equation*}
  \maxr n G {C_k} = n\Bigl( \frac{k-2}2 + \frac1{k-1} \Bigr) + O(1).
\end{equation*}
As for the lower bounds, Axenovich and
Iverson~\cite{bib:AI-edge-colorings} prove that if $G$ is
non-bipartite and the minimum degree of $H$ is at least $3$, then
\begin{equation}
  \maxr n G H \geq n \log n. \label{eq:lower}
\end{equation}

If we restrict to the case where $G$ and $H$ are complete graphs, the
above results asymptotically determine $\maxr n G H$ in all cases
except $H = K_4$, where we only have the bounds (\ref{eq:upper}) and
(\ref{eq:lower}). In particular, the problem of determining $\maxr n
{K_4} {K_4}$ is referred to in~\cite{bib:AI-edge-colorings} as `one of
the most intriguing' in this area.

The purpose of this note is to improve the upper bound on $\maxr n
{K_m} {K_4}$ for all $m\geq 3$:
\begin{theorem}\label{t:new-upper}
  \begin{equation*}
    \maxr n {K_m} {K_4} \leq n^{3/2} \sqrt{2m}.
  \end{equation*}
\end{theorem}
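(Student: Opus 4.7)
I would bound $c=\maxr{n}{K_m}{K_4}$ via a representative graph plus a K\H{o}v\'ari--S\'os--Tur\'an argument. Choose one edge per colour to obtain $R\subseteq E(K_n)$ with $|R|=c$. Any $K_4$ in $R$ would exhibit six distinct colours on six edges, so $R$ is $K_4$-free. The target exponent $3/2$ suggests going further: the right forbidden subgraph for $R$ should be $K_{2,t}$ for some $t$ of order $m$, since $\mathrm{ex}(n,K_{2,t})\le\tfrac{1}{2}\bigl(\sqrt{t-1}\,n^{3/2}+n\bigr)$.

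The heart of the argument is therefore to prove that $R$ contains no $K_{2,t}$ with $t=O(m)$. Suppose such a $K_{2,t}$ is present, with parts $\{u,w\}$ and $\{v_1,\dots,v_t\}$; let $\xi$ be the colour of $uw$, $\alpha_i$ the colour of $uv_i$ and $\beta_i$ the colour of $wv_i$, so these $2t+1$ colours are all distinct. For each pair $\{i,j\}$, non-rainbowness of the $K_4$ on $\{u,w,v_i,v_j\}$ forces the colour of $v_iv_j$ to lie in $\{\xi,\alpha_i,\alpha_j,\beta_i,\beta_j\}$. The pairs taking colour $\xi$ induce a $K_m$-free subgraph on $\{v_1,\dots,v_t\}$ (since the whole colour class of $\xi$ contains no $K_m$), so by Tur\'an at most $\mathrm{ex}(t,K_m)\le(1-\tfrac{1}{m-1})\binom{t}{2}$ pairs are of this type. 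The remaining pairs contribute edges of some $\alpha_i$ or $\beta_i$ incident to a single vertex $v_i$, producing monochromatic stars; a pigeonhole over $i$, combined with the monochromatic-$K_m$ prohibition applied to the corresponding colour class, should force a $K_m$ of colour $\alpha_{i_0}$ or $\beta_{i_0}$ once $t$ is large enough in terms of $m$.

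Given the claim, $R$ is $K_{2,t}$-free, and K\H{o}v\'ari--S\'os--Tur\'an yields $c\le n^{3/2}\sqrt{2m}$ after choosing $t$ with $\sqrt{t-1}/2\le\sqrt{2m}$, e.g.\ $t=8m+1$. The main obstacle is extracting a genuine monochromatic $K_m$ from the local star structure: within the $K_{2,t}$ itself, the edges among the leaves of an $\alpha_i$-star at $v_i$ are forced into $\{\xi,\alpha_\ell,\beta_\ell\}$ with $\ell\ne i$, so no colour-$\alpha_i$ edges appear among these leaves and the star does not close into a $K_m$ inside the $K_{2,t}$ alone. The argument therefore has to invoke colour-$\alpha_i$ edges lying outside the $K_{2,t}$, or iterate the pigeonhole across several star centres; calibrating $t$ so that the constant comes out to $\sqrt{2m}$ is the last piece of bookkeeping.
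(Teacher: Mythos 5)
Your reduction to a $K_{2,t}$-free representative graph does not work, and the obstacle you flag at the end is fatal rather than a matter of bookkeeping: the central claim that $R$ contains no $K_{2,t}$ with $t=O(m)$ is false. Consider the colouring of $K_n$ on vertices $u,w,v_1,\dots,v_{n-2}$ in which $uw$ gets colour $\xi$, each $wv_i$ gets its own colour $\beta_i$, and colour $\alpha_i$ is placed on $uv_i$ and on every edge $v_iv_j$ with $j>i$. Every colour class is a single edge or a star, so there is no monochromatic $K_3$, let alone $K_m$; and every $K_4$ contains two edges of colour $\alpha_i$, where $i$ is the smallest index among its $v$-vertices (either $uv_i$ together with $v_iv_j$, or $v_iv_j$ together with $v_iv_k$), so no $K_4$ is rainbow and the colouring is admissible for every $m\geq 3$. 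Yet choosing $uv_i$ as the representative of $\alpha_i$ and $wv_i$ as the representative of $\beta_i$ yields a representative graph containing $K_{2,n-2}$. This is exactly the failure mode you describe yourself: when $c(v_iv_j)=\alpha_i$, the $K_4$ on $\{u,w,v_i,v_j\}$ is already non-rainbow and nothing forces any colour class to close into a clique. (The example uses only linearly many colours, so it does not threaten the theorem, but it refutes the proposed lemma; and you offer no mechanism for choosing representatives so as to exclude such configurations, which would be a genuinely different and unclear statement.)

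The paper's proof circumvents precisely this obstacle by localizing the argument. For disjoint sets $A,B$ of size at most $k$ it counts only the colours used \emph{exclusively} on edges between $A$ and $B$; one representative cross edge is chosen per such colour. Then, for $x,y\in B$ with common neighbours $z_1,z_2\in A$ in the representative graph, the four cross edges have distinct colours and the colours of $xy$ and $z_1z_2$ are automatically different from all of them (they are not cross-confined colours), so non-rainbowness of the $K_4$ on $\{x,y,z_1,z_2\}$ really does force $c(z_1z_2)=c(xy)$; hence $m$ common neighbours would produce a monochromatic $K_m$, and the same K\H{o}v\'ari--S\'os--Tur\'an-type count you invoke bounds these colours by $k^{3/2}\sqrt m$. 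The colours that also occur inside $A$ or inside $B$ are then handled by induction after splitting $V(K_n)$ into two near-halves, which is where the factor $\sqrt 2$ in $n^{3/2}\sqrt{2m}$ comes from. So the Zarankiewicz-style counting is indeed the right tool, but it must be applied to cross-confined colours inside a divide-and-conquer scheme, not to a single global representative graph.
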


We prove this bound in Section~\ref{sec:upper}. In
Section~\ref{sec:lower}, we discuss a possible way to improve the
lower bound (for $m=4$) based on incidence graphs of finite projective
planes, and present some open problems.


\section{The upper bound}
\label{sec:upper}

Let $m\geq 3$ be a fixed integer throughout this section. Let us call
an edge-colouring of a complete graph $K_n$ \emph{admissible} if $K_n$
has no monochromatic complete subgraph on $m$ vertices and no rainbow
complete subgraph on $4$ vertices. For an admissible edge-colouring
$c$ of $K_n$ and disjoint sets $A,B\subset V(K_n)$, we define
$S_c(A,B)$ as the set of colours that are used by $c$, but only on
edges joining $A$ to $B$. Furthermore, we set $\sigma_c(A,B) =
\size{S_c(A,B)}$.

To prove the following lemma, one could use a suitable version of the
Zaran\-kie\-wicz theorem (e.g., that in \cite[Exercise
2.6]{bib:Juk-extremal}). For the reader's convenience, we give a
self-contained proof.

\begin{lemma}\label{l:across}
  Let $c$ be an admissible edge-colouring of $K_n$ and $A,B$ disjoint
  subsets of $V(K_n)$ each of size at most $k$. Then
  \begin{equation*}
    \sigma_c(A,B) \leq k^{3/2}\sqrt m.
  \end{equation*}
\end{lemma}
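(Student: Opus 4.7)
The plan is to choose, for each colour $\gamma \in S_c(A,B)$, a single edge of colour $\gamma$; by definition of $S_c(A,B)$, each such edge lies between $A$ and $B$, so these representatives form a bipartite graph $H \subseteq K_{A,B}$ with $|E(H)| = \sigma_c(A,B)$. The goal is then to bound the number of edges of $H$.

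The decisive observation is that no pair $a_1, a_2 \in A$ can have many common neighbours in $H$. Suppose $b_1, b_2 \in N_H(a_1) \cap N_H(a_2)$. The four cross edges $a_i b_j$ lie in $H$ and so carry four distinct colours, all belonging to $S_c(A,B)$; in particular none of these colours is used on the edges $a_1 a_2$ or $b_1 b_2$, which lie inside $A$ and $B$ respectively. For the $K_4$ on $\{a_1, a_2, b_1, b_2\}$ to avoid being rainbow, we are therefore forced to have $c(a_1 a_2) = c(b_1 b_2)$. Varying $b_1, b_2$ over the common neighbourhood shows that this set spans a monochromatic clique of colour $c(a_1 a_2)$, and the prohibition on a monochromatic $K_m$ now yields $|N_H(a_1) \cap N_H(a_2)| \leq m-1$.

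A standard cherry-counting argument (of the type underlying the Zarankiewicz bound) then gives
\begin{equation*}
  \sum_{b \in B}\binom{d_H(b)}{2} \leq (m-1)\binom{|A|}{2}.
\end{equation*}
Applying Cauchy--Schwarz to the degree sequence on the left and inserting $|A|,|B|\leq k$ leads, after routine rearrangement, to $e^2 \leq ek + (m-1)k^3$, where $e = \sigma_c(A,B)$.

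The only mildly delicate point is that this quadratic inequality on its own yields $e \leq k^{3/2}\sqrt m + O(k)$ rather than the clean bound $k^{3/2}\sqrt m$ claimed. I would close this gap by combining it with the trivial estimate $e \leq |A|\cdot |B|\leq k^2$: if $e > k^{3/2}\sqrt m$ then $e^2 > mk^3$, and substitution into the quadratic inequality forces $k^3 < ek$, i.e.\ $e > k^2$, contradicting the trivial bound. This completes the argument.
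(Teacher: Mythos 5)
Your proof is correct and follows essentially the same route as the paper: choose one representative edge per colour of $S_c(A,B)$, use the no-rainbow-$K_4$/no-monochromatic-$K_m$ argument to bound codegrees in the resulting bipartite graph by $m-1$, and apply a convexity (Cauchy--Schwarz/Jensen) count to get a quadratic inequality in $e=\sigma_c(A,B)$. The only difference is in the closing algebra: the paper solves the quadratic and checks directly that the root is at most $k\sqrt{km}$, whereas you finish by playing the inequality off against the trivial bound $e\leq |A|\,|B|\leq k^2$; both yield the clean bound $k^{3/2}\sqrt{m}$.
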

\begin{proof}
  For each colour $s\in S_c(A,B)$, choose an edge of colour $s$ and
  let $Y$ be the set of all chosen edges. Define $H$ to be the
  spanning subgraph of $K_n$ with edge set $Y$. Observe that $\size Y
  = \sigma_c(A,B)$. We claim that every two vertices $x,y \in
  B$ have fewer than $m$ common neighbors in the graph $H$. 

  For any two vertices $x,y\in B$, let $A_{xy}$ be the set of common
  neighbours of $x$ and $y$ in the graph $H$. Consider $z_1,z_2\in
  A_{xy}$. Since the induced subgraph of $K_n$ on $\Setx{x,y,z_1,z_2}$
  is not rainbow, we must have $c(xy) = c(z_1z_2)$. But then all the
  edges on $A_{xy}$ have colour $c(xy)$. Since $G$ contains no
  monochromatic complete subgraph of order $m$, we have $\size{A_{xy}}
  \leq m-1$ for every $x,y\in B$.

  Let $N$ be the number of all triples $xyz$ with $x,y\in B$ and $z\in
  A_{xy}$. By the above, 
  \begin{equation*}
    N \leq (m-1){\size B \choose 2}.
  \end{equation*}
  On the other hand, if we set $d_1,\dots,d_\ell$ to be the degrees of
  the vertices in $A$ in the graph $H$ ($\ell=\size A$), we find that
  $N$ equals the sum of $d_i \choose 2$ and therefore
  \begin{equation}\label{eq:upper-di}
    \sum_{i=1}^\ell {d_i \choose 2} \leq (m-1){k \choose 2}.
  \end{equation}

  Since the function $f(x) = x(x-1)/2$ is convex, we may use Jensen's
  inequality to derive
  \begin{equation*}
    \sum_{i=1}^\ell {d_i \choose 2} \geq \ell \cdot 
    \frac{(\sum_{i=1}^\ell d_i)/\ell \cdot ((\sum_{i=1}^\ell
        d_i)/\ell - 1)}2.
  \end{equation*}
  Observing that the sum of the $d_i$ is $\size Y$ and combining with
  (\ref{eq:upper-di}), we obtain
  \begin{equation*}
    \size Y(\size Y - \ell) \leq k(k-1)(m-1)\ell.
  \end{equation*}
  Furthermore, $\ell$ may be replaced with $k$ on both sides of the
  inequality as $\ell\leq k$. This leads to the following quadratic
  inequality in $\size Y$:
  \begin{equation}\label{eq:quad}
    {\size Y}^2 - k\size Y - k^2(k-1)(m-1) \leq 0.
  \end{equation}
  Solving~(\ref{eq:quad}), we find
  \begin{equation}\label{eq:size-y}
    \size Y \leq k \cdot \frac{1 + \sqrt{1+4(k-1)(m-1)}}2.
  \end{equation}

  The fraction in the right hand side of~(\ref{eq:size-y}) is easily
  seen to be at most $\sqrt{km}$ by a direct calculation, so $\size Y
  \leq k^{3/2}\sqrt m$ and the statement of the lemma is true.
\end{proof}

It is now easy to derive our upper bound on $\maxr n {K_m} {K_4}$:

\begin{proof}[Proof of Theorem~\ref{t:new-upper}]
  We proceed by induction on $n$. It is easy to check that for $n\leq
  21$, $n \choose 2$ is less than $n^{3/2}\sqrt{2m}$ for $m\geq 3$, so
  we may assume that $n \geq 22$. Set $\alpha=(1+1/22)^{3/2}$ and note
  that $(n+1)^{3/2} \leq \alpha n^{3/2}$.
  
  Let $c$ be an admissible colouring of $K_n$. For $X\subset V(K_n)$,
  define $\ell(X)$ as the number of colours used for edges on $X$. We
  need to prove that $\ell(V(K_n)) \leq n^{3/2}\sqrt{2m}$. To this
  end, partition $V(K_n)$ arbitrarily into sets $A$ and $B$ such that
  $\size A \leq n/2$ and $\size B \leq (n+1)/2$. By
  Lemma~\ref{l:across} and the induction, we then have
  \begin{align*}
    \ell(V(K_n)) &\leq \ell(A) + \ell(B) + \sigma_c(A,B)\\
    &\leq \Bigl(\frac n2\Bigr)^{3/2}\sqrt{2m} +
    \Bigl(\frac{n+1}2\Bigr)^{3/2}\sqrt{2m}
    + \Bigl(\frac{n+1}2\Bigr)^{3/2}\sqrt m\\
    &\leq n^{3/2}\sqrt m \cdot \frac{\alpha(\sqrt2+1)+\sqrt2}{2\sqrt2}\\
    &< n^{3/2}\sqrt{2m}.
  \end{align*}
\end{proof}


\section{Lower bounds}
\label{sec:lower}

Theorem~\ref{t:new-upper} improves the asymptotic upper bound for
$\maxr n {K_4} {K_4}$ to $O(n^{3/2})$, but this is still far from the
lower bound $n \log n$ of (\ref{eq:lower}). We now discuss a possible
way to improve the lower bound, which is based on incidence graphs of
finite projective planes. (See, e.g., \cite{bib:Cam-finite} for
background on finite geometries.)

Throughout this section, let $q$ be a prime power and
$n(q)=2(q^2+q+1)$.  Recall that there is a projective plane $PG(2,q)$
of order $q$. The incidence graph of $PG(2,q)$ is a $(q+1)$-regular
bipartite graph $L_q$ whose vertices are the points and the lines of
$PG(2,q)$, and whose edges join each point $p$ to the lines containing
$p$. Since $PG(2,q)$ has $q^2+q+1$ points and the same number of
lines, we can (and will) consider $L_q$ as a spanning subgraph of the
complete graph on $n(q)$ vertices.

One way to obtain an admissible colouring of $K_{n(q)}$ using
$\Omega(n^{3/2})$ colours is to first colour $L_q$, assigning each of
its edges a colour of its own (one that does not appear on any other
edge of $L_q$), and then try to extend this colouring to an admissible
colouring of $K_{n(q)}$. Since $L(q)$ has $\Omega(n(q)^{3/2})$ edges,
the number of colours is as requested.

Among the colourings obtained this way, we looked for ones satisfying
a mild additional restriction (which may make them somewhat easier to
find). Call a colouring $c$ of $K_{n(q)}$ \emph{special} if no edge of
$L_q \subset K_{n(q)}$ has a colour which is used on another edge of
$K_{n(q)}$. Note that to describe the colouring up to a permutation of
colours, it suffices to specify the colours of the edges not in $L_q$.

Figure~\ref{fig:fano} shows that special colourings do exist in the
case $q=2$, where we obtain the well-known Heawood graph on 14
vertices as the graph $L_2$. One method to find such colourings is as
follows. Regarding the vertices of $K_{14}$ as points and lines of
$PG(2,2)$, choose a 7-cycle $C \subset K_{14}$ on the points and a
7-cycle $C' \subset K_{14}$ on the lines such that every edge of $C$
and every edge of $C'$ are at distance $1$ in $L_2$. (It is not
difficult to show that such a choice is possible.) Assign colour $0$
to all edges of $K_{14}$ that are included in $C$ or $C'$, or join a
point of $PG(2,2)$ to a line. Colour the other edges of $K_{14}$ with
colour $1$. Easy case analysis confirms that the associated special
colouring is indeed admissible.

\begin{figure}
  \centering
  \begin{tikzpicture}[scale=3]
    \graph
    \tikzstyle{every node}+=[minimum size=5pt];
    \tikzstyle{dummy}+=[draw=none,fill=none];
    \newcommand{\PP}[1] {90 - 360/7 * {(#1)}:1}
    \newcommand{\LL}[1] {90 + 360/14 - 360/7 * {(#1)}:1}
    \foreach\i in {0,1,...,6}
    {
      \draw[ultra thick] (\PP\i) node (p) {}
        (\LL\i) node[dummy] {} edge (p)
        (\LL{\i+1}) node[dummy] {} edge (p)
        (\LL{\i+3}) node[dummy] {} edge (p);
      \draw[black!30] 
        (\PP{\i+2}) node[dummy] {} edge (p)
        (\PP{\i+3}) node[dummy] {} edge (p)
        (\PP{\i+4}) node[dummy] {} edge (p)
        (\PP{\i+5}) node[dummy] {} edge (p);
      \draw[black!30] (\LL\i) node[black] (l) {}
        (\LL{\i+1}) node[dummy] {} edge (l)
        (\LL{\i+2}) node[dummy] {} edge (l)
        (\LL{\i+5}) node[dummy] {} edge (l)
        (\LL{\i+6}) node[dummy] {} edge (l);
    }

  \end{tikzpicture}
  \caption{A special admissible colouring of $K_{14}$ by $23$
    colours. Thick edges are those of $L_2$ (hence each of them has a
    colour of its own, say from $\Setx{2,\dots,22}$), missing edges represent
    colour $0$, grey edges represent colour $1$.}
  \label{fig:fano}
\end{figure}

In general, a rotational symmetry such as that of
Figure~\ref{fig:fano} may be useful when looking for special
colourings. The first question to be addressed, however, is whether
the graphs $L_q$ ($q \geq 3$) themselves admit a rotationally
symmetric drawing. More precisely, let us call a Hamilton cycle
$v_0v_1\dots v_{n(q)-1}$ in $L_q$ \emph{rotational} if for each
$i,j\in \Setx{0,\dots,n(q)-1}$, $v_iv_j \in E(L_q)$ if and only if
$v_{i+2}v_{j+2} \in E(L_q)$ (indices taken modulo $n(q)$). Somewhat
surprisingly, it turned out that all the graphs $L_q$, where $q$ is a
prime power, have rotational Hamilton cycles. We suppose that this is
a known result, but since our search in the literature did not reveal
anything, we briefly sketch the proof. The only result in this
direction we are aware of is the result of
Brown~\cite{bib:Bro-hamiltonian} that the graphs $L_q$, for prime $q$,
are Hamiltonian. (We are indebted to Geoff Exoo for this information.)

\begin{proposition}\label{p:rotational}
  For any prime power $q$, the graph $L_q$ admits a rotational
  Hamilton cycle.
\end{proposition}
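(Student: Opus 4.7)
The plan is to exploit the Singer cyclic automorphism of $PG(2,q)$. Writing $N = q^2+q+1$, recall that $PG(2,q)$ admits an automorphism $\sigma$ of order $N$ acting regularly on both the points and the lines. Using $\sigma$, I would label the points as $p_0, p_1, \dots, p_{N-1}$ and the lines as $\ell_0, \ell_1, \dots, \ell_{N-1}$ so that $\sigma$ shifts each index by $1$. After choosing a convenient basepoint $\ell_0$, this yields a perfect difference set $D \subseteq \Setx{0,1,\dots,N-1}$ of size $q+1$ with the property that $p_i$ lies on $\ell_j$ if and only if $i-j \in D$ (modulo $N$).

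My Hamilton-cycle candidate is the interleaved sequence
\begin{equation*}
  v_{2i} := p_i, \qquad v_{2i+1} := \ell_{i+d} \qquad (i = 0, 1, \dots, N-1),
\end{equation*}
where $d$ is a residue modulo $N$ still to be chosen. The shift $v_k \mapsto v_{k+2}$ then acts as $\sigma$ on both the point-layer and the line-layer, so it is automatically an automorphism of $L_q$, and the rotational property holds by construction.

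It remains to pick $d$ so that consecutive vertices are adjacent in $L_q$. The conditions $v_{2i}v_{2i+1} \in E(L_q)$ and $v_{2i+1}v_{2i+2} \in E(L_q)$ translate, via the incidence rule, into the single pair of requirements $-d \in D$ and $1-d \in D$; that is, $D$ must contain two consecutive residues. This is exactly where the defining property of $D$ does the work: since $D$ is a perfect difference set modulo $N$, the nonzero residue $1$ arises as a difference $a-b$ with $a,b \in D$, so $b$ and $b+1$ both lie in $D$, and the choice $d := -b$ is valid. The resulting sequence visits every point at its even positions and every line at its odd positions, so it is indeed a Hamilton cycle.

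The only step that is not an immediate verification is the input from the Singer description of $PG(2,q)$, which is entirely classical; beyond that, the argument reduces to a single application of the perfect-difference-set property to the residue $1$. I therefore do not anticipate any genuine obstacle to carrying out the proof along these lines.
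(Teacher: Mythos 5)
Your proof is correct, and it reaches the same Hamilton cycle as the paper by a somewhat different route. The paper does not invoke Singer's theorem as a black box: it works concretely in $GF(q^3)$ viewed as a $3$-dimensional space over $GF(q)$, takes a primitive element $\alpha$, sets $p_i$ to be the point through $\alpha^i$ and $\ell_i$ to be the line through $\alpha^i$ and $\alpha^{i+1}$, and then proves by hand (a Sidon-set style argument: if two of these lines coincided, all powers of $\alpha$ would lie in one plane) that the $\ell_i$ are pairwise distinct; rotationality follows because $p_i\in\ell_j$ is the linear dependence of $\alpha^i,\alpha^j,\alpha^{j+1}$, which is preserved under multiplication by $\alpha$. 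You instead quote the classical Singer description --- a cyclic group of order $N=q^2+q+1$ acting regularly on points and lines, with incidence encoded by a planar difference set $D$ --- and then the whole construction reduces to your neat observation that $1$ is a difference of two elements of $D$, so $D$ contains consecutive residues $b,b+1$ and the interleaving $p_i,\ell_{i-b},p_{i+1},\dots$ works; distinctness of the lines is free from the regularity of the Singer action, and rotationality is built in. Note that your cycle is literally the paper's: $\ell_{i-b}$ contains both $p_i$ and $p_{i+1}$, so it is the unique line through consecutive points of the point orbit. What your version buys is brevity and conceptual clarity (the "two consecutive residues in $D$" trick replaces the distinctness computation); what the paper's version buys is self-containedness, since the verification that the line orbit has full length $N$ --- which in your account is hidden inside Singer's theorem --- is exactly the explicit contradiction argument with the plane $P$ spanned by $1$ and $\alpha$. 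Your appeal to the perfect-difference-set property is legitimate (only the representability of the residue $1$ is needed), but if the proof were to be written out in full you should either cite Singer's theorem precisely or reproduce the short argument that two distinct lines $\ell_0$ and $\ell_{-r}$ meet in exactly one point, which is what makes $D$ a difference set.
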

\begin{proof}
  The proof is inspired by a proof of Erd\H{o}s~\cite{bib:Erd-problem}
  concerning so-called Sidon sets (see also~\cite{bib:Cho-solution}),
  which in turn builds on a proof of Singer~\cite{bib:Sin-theorem} of
  the existence of perfect difference sets. We take a primitive
  element $\alpha$ in the field $GF(q^3)$ and view the field as a
  vector space $V$ of dimension 3 over $F = GF(q)$. Recall that points
  and lines of $PG(2,q)$ correspond one-to-one to subspaces of $V$ of
  dimension 1 and 2, respectively. In particular, for
  $i=0,\dots,q^2+q$, let $p_i$ be the point of $PG(2,q)$ corresponding
  to the line in $V$ through 0 and $\alpha^i$. All of these points are
  distinct, since $\alpha^j$ is a scalar multiple of $\alpha^i$ if and
  only if $j-i$ is a multiple of $q^2+q+1$.

  Let $\ell_i$ ($i=0,\dots,q^2+q-1$) be the unique line of $PG(2,q)$
  through $\alpha^i$ and $\alpha^{i+1}$, and similarly let
  $\ell_{q^2+q}$ be the line through $\alpha^{q^2+q}$ and 1.

  To verify that $H = (p_0,\ell_0,p_1,\dots,p_{q^2+q},\ell_{q^2+q})$
  is a Hamilton cycle in $L_q$, all we need to show is that all the
  lines $\ell_i$ are distinct. Suppose not. Then for some $0\leq i < j
  \leq q^2+q$, the set $\Setx{0, \alpha^i, \alpha^{i+1}, \alpha^j,
    \alpha^{j+1}}$ is contained in a plane $P$ of $V$ (note that this
  holds even in the boundary case $j=q^2+q$).

  Without loss of generality, we may assume that $i=0$ (multiplying
  the equation of $P$ by $\alpha^{-i}$ if necessary), which implies
  that the set $\Setx{0, 1,\alpha,\alpha^j,\alpha^{j+1}}$ is contained
  in $P$. Since 1 and $\alpha$ span $P$, we may write $\alpha^j =
  c\alpha + d$, where $c,d\in F$. Hence $\alpha^{j+1} = c\alpha^2 +
  d\alpha$, and since $\alpha^{j+1}$ is in $P$, so is
  $\alpha^2$. However, a similar argument then shows that $\alpha^3$
  and all the successive powers of $\alpha$ are also in $P$, a
  contradiction with the fact that $V$ is 3-dimensional.

  What remains to be shown is that the Hamilton cycle $H$ is
  rotational, i.e. that if a point $p_i$ lies on a line $\ell_j$, then
  $p_{i+1}$ lies on $\ell_{j+1}$. A key observation is that $p_i$ lies
  on $\ell_j$ if and only if $\alpha^i$, $\alpha^j$ and $\alpha^{j+1}$
  are linearly dependent in $V$. Assuming this condition holds, it is
  clear that $\alpha^{i+1}$, $\alpha^{j+1}$ and $\alpha^{j+2}$ are
  also linearly dependent, i.e. $p_{i+1}$ lies on $\ell_{j+1}$ as
  claimed.
\end{proof}

\begin{figure}
  \centering
  \begin{tikzpicture}[scale=3]
    \graph
    \tikzstyle{every node}+=[minimum size=5pt];
    \tikzstyle{dummy}+=[draw=none,fill=none];
    \newcommand{\VV}[1] {90 - 360/26 * {(#1)}:1}
    \foreach\i in {0,2,...,24}
    {
      \draw (\VV\i) node (v) {}
        (\VV{\i-1 + 2*0}) node {} edge[very thick] (v)
        (\VV{\i-1 + 2*1}) node[dummy] {} edge[very thick] (v)
        (\VV{\i-1 + 2*3}) node[dummy] {} edge[bend left] (v)
        (\VV{\i-1 + 2*9}) node[dummy] {} edge (v);
    }
  \end{tikzpicture}
  \caption{A rotational Hamilton cycle in $L_3$ (bold).}
  \label{fig:rot}
\end{figure}

An example of a rotational Hamilton cycle constructed using
Proposition~\ref{p:rotational} is shown in Figure~\ref{fig:rot}. 

Let $v_0v_1\dots v_{n(q)-1}$ be a rotational Hamilton cycle in
$L_q$ and let $c$ be an edge-colouring of $K_{n(q)}$ with colours in a
set $Y$. For $i\neq j$ in $\Setx{0,\dots,n(q)-1}$, we define a symbol
$\bar c_{i,j} \in Y\cup\Setx{\text{$\ast$}}$ by
\begin{equation*}
  \bar c_{i,j} =
  \begin{cases}
    * & \text{if $v_iv_j$ is an edge of $L_q$,}\\
    c(v_iv_j) & \text{otherwise.}
  \end{cases}
\end{equation*}
For $i=0,\dots,n(q)-1$, we define the words
\begin{equation*}
  c(v_i) = (\bar c_{i,i+1} \bar c_{i,i+2} \dots 
  \bar c_{i,i-1}),
\end{equation*}
where the indices are taken modulo $n(q)$. We extend the above
terminology and call the colouring $c$ \emph{rotational} if
$c(v_i)=c(v_0)$ for all even $i$, and $c(v_i) = c(v_1)$ for all odd
$i$.

This is the case in Figure~\ref{fig:fano}, where we have
\begin{align*}
  c(v_0) &= (\text{$\ast$001$\ast$1010100$\ast$}),\\
  c(v_1) &= (\text{$\ast$1010000$\ast$101$\ast$}).
\end{align*}

For $q = 3$, we found a number of rotational colourings by a computer
search. One of these, for instance, is determined by the words
\begin{align*}
  c(v_0) &= (\text{$\ast$00001$\ast$001$\ast$1110100110010$\ast$}),\\
  c(v_1) &= (\text{$\ast$0100110010111$\ast$100$\ast$10000$\ast$}).
\end{align*}
Note that the words in the latter case have the additional curious
property that $c(v_1)$ is the reverse of $c(v_0)$.

In general, we had to leave the following problem open:
\begin{problem}\label{prob:lower}
  For $q\geq 3$, are there any admissible rotational colourings of
  $K_{n(q)}$? Are there any admissible special colourings?
\end{problem}

We think that even a negative answer to Problem~\ref{prob:lower} may
shed some light on the question whether the upper bound given in
Theorem~\ref{t:new-upper} is asymptotically tight.


\section*{Acknowledgment}

We are indebted to Martin Klazar for pointing us to the results on
Sidon sets and perfect difference sets which led us to the proof of
Proposition~\ref{p:rotational}. Geoff Exoo kindly informed us about
the paper of Brown~\cite{bib:Bro-hamiltonian}.


\end{document}